\theoremstyle{plain}
\newtheorem{theorem}{Theorem}[section]
\newtheorem{lemma}[theorem]{Lemma}
\theoremstyle{definition}
\newtheorem{remark}[theorem]{Remark}
\newcommand{\Q}{\mathbb{Q}}
\newcommand{\K}{\mathbb{K}}
\renewcommand{\L}{\mathbb{L}}
\newcommand{\opn}[1]{\operatorname{#1}}
\newcommand{\Gal}{\opn{Gal}}
\newcommand{\Ind}{\opn{Ind}}
\newcommand{\triv}{\mathbf 1}
\newcommand{\dd}{\,\mathrm{d}}
\renewcommand{\Re}{\opn{Re}}
\begin{document}
\title[Explicit conditional bounds]{Explicit conditional bounds for the residue of a Dedekind zeta-function at $s=1$}

\author[S.~R.~Garcia]{Stephan Ramon Garcia}
\address[S.~R.~Garcia]{Department of Mathematics and Statistics\\
         Pomona College\\
         610 N. College Ave.\\
         Claremont, CA 91711\\
         USA}
\email{stephan.garcia@pomona.edu}
\urladdr{\url{https://stephangarcia.sites.pomona.edu/}}

\author[L.~Greni\'{e}]{Lo\"{\i}c Greni\'{e}}
\address[L.~Greni\'{e}]{Dipartimento di Ingegneria gestionale, dell'informazione e della produzione\\
         Universit\`{a} di Bergamo\\
         viale Marconi 5\\
         I-24044 Dal\-mi\-ne\\
         Italy}
\email{loic.grenie@gmail.com}

\author[E.~S.~Lee]{Ethan Simpson Lee}
\address[E.~S.~Lee]{University of the West of England\\
         School of Computing and Creative Technologies\\
         Coldharbour Lane\\
         Bristol BS16 1QY\\
         United Kingdom}
\email{ethan.lee@uwe.ac.uk}
\urladdr{\url{https://sites.google.com/view/ethansleemath/home}}

\author[G.~Molteni]{Giuseppe Molteni}
\address[G.~Molteni]{Dipartimento di Matematica\\
         Universit\`{a} di Milano\\
         via Saldini 50\\
         I-20133 Milano\\
         Italy}
\email{giuseppe.molteni1@unimi.it}
\urladdr{\url{https://sites.unimi.it/molteni/}}

\begin{abstract}
We prove new explicit conditional bounds for the residue at $s=1$ of the Dedekind zeta-function
associated to a number field. Our bounds are concrete and all constants are presented with explicit
numerical values.
\end{abstract}

\keywords{Artin $L$-function, Dedekind zeta-function, residue}
\subjclass[2010]{11M06, 11R42, 11M99, 11S40}

\maketitle

\section{Introduction}
Suppose that $\K\neq\Q$ is a number field with degree $n_{\K}$ and discriminant $\Delta_{\K}$. %
Let $\zeta_{\K}(s)$ denote the Dedekind zeta-function associated to $\K$ and let $\zeta = \zeta_{\Q}$
denote the Riemann zeta-function. Recall that the Generalized Riemann Hypothesis (GRH) for $\zeta_{\K}$
postulates that if $\zeta_{\K}(\sigma + it) = 0$ and $0 < \sigma < 1$, then $\sigma = 1/2$ and $t\neq 0$.

Since the residue $\kappa_{\K}$ of $\zeta_{\K}(s)$ at $s=1$ encodes important arithmetic information
about $\K$ via the class number formula, the bounds for this invariant are naturally interesting.
Conditionally, we can prove explicit bounds for $\kappa_{\K}$ of the shape
\begin{equation}\label{eqn:right_shape}
\frac{c_-}{\ln\ln|\Delta_{\K}|}
\leq \kappa_{\K}
\leq c_+(\ln\ln|\Delta_{\K}|)^{n_{\K}-1}
\end{equation}
for suitable functions $c_{\pm}$ that weakly depend on $n_{\K}$ or $|\Delta_{\K}|$. Cho and Kim
\cite{ChoKim} have proved, as a byproduct of their approach, that if GRH for $\zeta_{\K}$ and the
strong Artin conjecture for the quotient $\zeta_\K/\zeta$ are true, then
\begin{equation}\label{eq:ChoKim}
c_{-} = (1 + o(1))\frac{\zeta(n_{\K})}{2 e^{\gamma}}
\quad\text{and}\quad
c_{+} = (2e^{\gamma} + o(1))^{n_{\K}-1}
\end{equation}
are admissible in \eqref{eqn:right_shape}, where $\gamma = 0.57721\dots$ denotes the Euler--Mascheroni
constant. However, these authors did not specify the $o(1)$ terms, nor it is straightforward to extract
a simple computation from their arguments.

We prove the following theorem, which gives new bounds for $\kappa_{\K}$ that are explicit and match the
asymptotic strength of the bounds from \eqref{eq:ChoKim}. Although the constants $19$ in
\eqref{eq:BoundUpper} and \eqref{eq:BoundLower_ii} can be refined by examining various computations
within the body of this paper, we prefer the simpler expressions in Theorem \ref{Theorem:Main}.

\begin{theorem}\label{Theorem:Main}
Suppose that $\K$ is a number field such that $|\Delta_{\K}|\geq 14$, the GRH for
$\zeta_{\K}$ is true, and $\zeta_{\K}/\zeta$ is entire. Then
\begin{align}
\kappa_{\K}
 &\leq \Big(2 e^{\gamma}
      (\ln\ln{|\Delta_{\K}|})^{1 + \frac{19}{\ln\ln{|\Delta_{\K}|}}}\Big)^{n_{\K} - 1}
    \label{eq:BoundUpper}
\intertext{and}
\kappa_{\K}
 &\geq \frac{\zeta(n_{\K})}
            {2e^\gamma (\ln\ln{|\Delta_{\K}|})^{1 + \frac{19 (n_{\K} - 1)}{\ln\ln{|\Delta_{\K}|}}}} .
    \label{eq:BoundLower_ii}
\end{align}
\end{theorem}
The assumption $|\Delta_{\K}|\geq 14$ is necessary in order to have the upper bound function in
\eqref{eq:BoundUpper} be truly larger then the lower bound function in \eqref{eq:BoundLower_ii}.
Observe also that there are only $\Q$ and nine quadratic fields with $|\Delta_\K|\leq 13$.

Notice that in Theorem~\ref{Theorem:Main} we only assume that the quotient $\zeta_\K/\zeta$ is
entire, that is, the usual Artin's hypothesis for this function, not the strong Artin's conjecture
(the claim that the Artin $L$-function $\zeta_\K/\zeta$ is entire because it coincides with the
$L$-function coming from a suitable automorphic representation). Under this point of view,
Theorem~\ref{Theorem:Main} is less demanding that what is proved in~\cite{ChoKim}.

To prove Theorem \ref{Theorem:Main}, we prove auxiliary bounds in Section \ref{Section:AuxiliaryBounds},
approximate $\ln{\kappa_{\K}}$ in Section \ref{Section:Duck}, and then complete the proof in
Section \ref{Section:Proof}. There are several results along the way that may be of independent interest.

There are some similarities between our arguments to prove Theorem \ref{Theorem:Main} and the arguments
employed by Cho and Kim \cite{ChoKim} to prove \eqref{eq:ChoKim}. However, our approaches differ in two
major ways. First, every argument in our work needs be explicit, resulting in a large number of
intermediate steps and computations that must be carefully tracked. Second, the arguments we use to
approximate $\ln{\kappa_{\K}}$ in Section \ref{Section:Duck} refine the arguments employed in
\cite{ChoKim}. Specifically, our argument implements a smoothing technique to ensure stronger
computational outputs.

\begin{remark}
In earlier work, the first and third authors \cite{GarciaLeeArtinL} proved that if the GRH for
$\zeta_{\K}$ is true and $\zeta_{\K}/\zeta$ is entire, then one can take
\begin{equation*}
c_{-} = e^{- 17.81(n_{\K} - 1)}
\quad\text{and}\quad
c_{+} = e^{  17.81(n_{\K} - 1)}
\end{equation*}
in \eqref{eqn:right_shape}. Under the same assumptions, Paloj\"{a}rvi and Simoni\v{c} \cite[Cor.~4]{PalSim}
proved that if $|\Delta_{\K}| \geq 5.4\cdot 10^{6}$, then a stronger upper bound with
\begin{equation}\label{eq:PalSim}
c_{+} = \big(2 e^{\gamma + \frac{2.475}{\ln\ln{|\Delta_{\K}|}}}\big)^{n_{\K}-1}
\end{equation}
is available. We notice that our new
lower bound \eqref{eq:BoundLower_ii} considerably improves on these previous results, while our new upper
bound \eqref{eq:BoundUpper} is actually weaker than \eqref{eq:PalSim}. This difference can be explained
by different methodologies.
\end{remark}
\begin{remark}
In related work, Bessassi \cite[Thm.~10]{Bessassi} proved that if $\K$ is normal and $\zeta_\K$ satisfies
GRH, then
\begin{equation}\label{eqn:Bessassi}
c_{-} = \frac{1}{2e^\gamma} + o(1),
\end{equation}
in which the little-$o$ term is not made explicit but is effective. Upon examining their proof, one can
see that it is essentially of order $\ln\ln\ln|\Delta_\K|/\ln\ln|\Delta_\K|$. Although the lower bound of
\eqref{eqn:Bessassi} is worse than the lower bound of \eqref{eq:ChoKim} and \eqref{eq:BoundLower_ii} by
the factor $\zeta(n_\K)$, their result follows from different hypotheses.
\end{remark}
\begin{remark}
Louboutin \cite{Louboutin00, Louboutin01, Louboutin03, Louboutin05, Louboutin11} and the first and third
authors \cite{GarciaLeeMertens} proved several unconditional explicit bounds for $\kappa_{\K}$.
For example, these bounds tell us that if $\K\neq\Q$, then
\begin{equation}\label{eq:UncondKappa}
\frac{0.36232}{\sqrt{|\Delta_{\K}|}}
\leq \kappa_{\K}
\leq \Big(\frac{e\ln |\Delta_{\K}| }{2(n_{\K} - 1)}\Big)^{n_{\K} - 1} .
\end{equation}
Specifically, this upper bound is due to Louboutin \cite[Thm.~1]{Louboutin00} and this lower bound is due
to the first and third authors \cite[Sec.~3]{GarciaLeeMertens}. A further lower bound by Oesterl\'{e} is given
in the Appendix of \cite{Bessassi}. Currently, no method is known that could produce bounds of the form
\eqref{eqn:right_shape} without assuming the GRH for $\zeta_{\K}$.
\end{remark}

\subsection*{Acknowledgements}
LG and GM are members of the INdAM group GNSAGA. SRG is supported by NSF Grants DMS-2054002 and
DMS-2452084. ESL thanks the Heilbronn Institute for Mathematical Research for their support, as well as
Matt Bissatt, Ross Paterson, and other colleagues for helpful conversations concerning the properties of
Artin $L$-functions. We also thank Andy Booker, Aleksander Simoni\v{c}, and Tim Trudgian for valuable
feedback, and the referee for their careful reading of this preprint and suggestions.

\section{Auxiliary results}\label{Section:AuxiliaryBounds}
Let $\K$ be a number field of degree $n_{\K} \geq 2$ and discriminant $\Delta_{\K}$, and let $\rho$ be
the representation introduced in Section~\ref{Section:ArtinL} that produces the Artin $L$-function
$L(s,\rho)$ such that $\zeta_\K(s) = \zeta(s) L(s,\rho)$. Moreover, let $a_\rho$ be the multiplicative
function associated with the identity $L(s,\rho)=\sum_{n=1} a_\rho(n)/n^s$ for $\Re(s)>1$ and define
\begin{equation}\label{eq:Sigma}
\Sigma(x) = \sum_{n\leq x} \frac{\Lambda(n)a_\rho(n)}{n \ln{n}} ,
\end{equation}
in which $\Lambda(n)$ is the von Mangoldt function. The goal in this section is to derive the upper and
lower bounds for $\Sigma(x)$ that we require in the proof of Theorem \ref{Theorem:Main}. These bounds are
presented in Lemmas \ref{lem:SigmaUpper} and \ref{lem:SigmaLower}, which we prove in Section
\ref{Section:KI}.

The remainder of this section is organised as follows. In Section \ref{Section:Psi}, we establish an
explicit, conditional approximation for
\begin{equation*}
\Psi(x) = \sum_{n \leq x} \frac{\Lambda(n)}{n \ln{n}}
 = \sum_{2\leq p^k\leq x} \frac{1}{kp^k}
.
\end{equation*}
In Section \ref{Section:PrimeFunctions}, we prove some auxiliary lemmas which bound certain functions
over primes. In Section \ref{Section:ArtinL}, we summarise important definitions and properties of the
Artin $L$-function $L(s,\rho)$. In Section \ref{Section:KI}, we combine the results from Sections
\ref{Section:Psi}-\ref{Section:ArtinL} to establish the explicit upper and lower bounds for $\Sigma(x)$
we require.

\subsection{Explicit bounds for \texorpdfstring{$\Psi(x)$}{Psi(x)}}\label{Section:Psi}
The following result may be of independent interest. The method to prove it is motivated by the related
work of Ramar\'{e} \cite{RamareEESF}, whose method is similar to the approaches in Rosser and Schoenfeld's
paper \cite{Rosser}.
\begin{theorem}\label{Theorem:PsiExplicit}
If the Riemann Hypothesis (RH) is true and $x\geq e$, then
\begin{equation}\label{eq:PsiExplicitFormula}
\big|\Psi(x) - \ln\ln x  - \gamma \big|
\leq
\dfrac{3(\ln x  + 2)}{8\pi \sqrt{x}} .
\end{equation}
\end{theorem}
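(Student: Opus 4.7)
The plan is to reduce Theorem~\ref{Theorem:PsiExplicit} to Schoenfeld's classical explicit RH bound $|\psi(t) - t| \leq \sqrt{t}(\log t)^2/(8\pi)$ on the Chebyshev function $\psi(t) = \sum_{n \leq t} \Lambda(n)$, via Abel summation, following the strategy of Rosser--Schoenfeld and Ramar\'e. The starting point is the Stieltjes representation
\[ \Psi(x) = \int_{2^-}^x \frac{d\psi(t)}{t \log t}, \]
together with the split $d\psi(t) = dt + dE(t)$ where $E(t) = \psi(t) - t$. The main piece integrates to $\log\log x - \log\log 2$. Applying Stieltjes integration by parts to the error piece and noting $E(2^-) = -2$ yields
\[ \Psi(x) = \log\log x + \Big(\tfrac{1}{\log 2} - \log\log 2\Big) + \frac{E(x)}{x \log x} + \int_2^x \frac{E(t)(\log t + 1)}{t^2 (\log t)^2}\,dt. \]

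To identify the constant, we invoke the classical (unconditional) asymptotic $\Psi(x) - \log\log x \to \gamma$, which is a disguised form of Mertens' third theorem. Letting $x \to \infty$ in the display above pins down an identity that, upon subtraction, yields the clean formula
\[ \Psi(x) - \log\log x - \gamma = \frac{E(x)}{x \log x} - \int_x^\infty \frac{E(t)(\log t + 1)}{t^2 (\log t)^2}\,dt. \]
Applying Schoenfeld's bound to both terms on the right, the boundary term is at most $\log x/(8\pi\sqrt{x})$, and the tail integral is bounded by $(2 \log x + 6)/(8\pi\sqrt{x})$ after evaluating $\int_x^\infty (\log t + 1)\, t^{-3/2}\,dt = (2\log x + 6)/\sqrt{x}$ via integration by parts. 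Summed, these contributions combine to precisely $3(\log x + 2)/(8\pi\sqrt{x})$, matching \eqref{eq:PsiExplicitFormula}.

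The main obstacle is expected to be logistical rather than conceptual: Schoenfeld's sharp bound is established only for $t$ beyond some numerical threshold (roughly $73.2$), whereas the theorem requires $x \geq e$. The tail integral $\int_x^\infty$ in the range where $x$ is small must therefore be split at the Schoenfeld threshold, with the lower portion covered by a coarser but uniformly valid explicit bound on $|E(t)|$ (or by direct computation of $\psi$), and the final inequality separately verified numerically for $x$ just above $e$. A secondary subtlety lies in the limit argument that identifies the constant as $\gamma$: one must verify that $E(x)/(x \log x) \to 0$ and that $\int_2^\infty E(t)(\log t + 1)/(t^2 (\log t)^2)\,dt$ converges, both of which follow from even weak forms of the prime number theorem but should be stated carefully.
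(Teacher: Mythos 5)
Your proposal follows essentially the same route as the paper: partial (Stieltjes) summation against $f(t)=1/(t\log t)$, splitting $\psi(t)=t+E(t)$, identifying the constant as $\gamma$ by letting $x\to\infty$ and invoking Mertens' third theorem, and then applying Schoenfeld's RH bound to the boundary term and the tail integral. Your numerical evaluations (boundary term $\leq \log x/(8\pi\sqrt{x})$, tail integral $\leq (2\log x+6)/(8\pi\sqrt{x})$, sum $= 3(\log x+2)/(8\pi\sqrt{x})$) are all correct and match the paper.

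One small simplification relative to your closing paragraph: you need not split the tail integral $\int_x^\infty$ at the Schoenfeld threshold, nor patch the small-$t$ portion with a cruder estimate. It is cleaner to do as the paper does: verify the final inequality $|\Psi(x)-\log\log x-\gamma|\leq 3(\log x+2)/(8\pi\sqrt{x})$ directly by computation for $e\leq x<73.2$, and then reserve the exact formula and Schoenfeld's bound exclusively for $x\geq 73.2$. In that regime every $t$ appearing in the tail integral satisfies $t\geq x\geq 73.2$, so Schoenfeld applies uniformly and no splitting is required. (Indeed, as you note, the boundary term $E(x)/(x\log x)$ would also need separate treatment for $x<73.2$, so direct verification in that window is unavoidable anyway and subsumes the whole small-$x$ case at once.) With that observation incorporated, your argument is complete and correct.
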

\begin{proof}
For $e \leq x <73.2$, a computation verifies \eqref{eq:PsiExplicitFormula}. For $x \geq 73.2$, we use a
result of Schoenfeld \cite{Schoenfeld}: if $x\geq 73.2$ and RH is true, then $\psi(x)= \sum_{n \leq x}
\Lambda(n)$ satisfies\label{p:psi}
\begin{equation}\label{eqn:Schoenfeld}
|\psi(x) - x| \leq \frac{\sqrt{x}(\ln x )^2}{8\pi} .
\end{equation}
Let $f(t)=\tfrac{1}{t\ln{t}}$. A partial summation yields:
\begin{align*}
\Psi(x)
&=\sum_{n\leq x} \Lambda(n) f(n)
 = \psi(x) f(x) - \int_2^x \psi(t) f'(t)\dd t                                            \\
&= (\psi(x) - x) f(x) + 2 f(2) + \int_2^x f(t)\dd t - \int_2^x (\psi(t) - t) f'(t)\dd t .
\end{align*}
Since the integral $\int_2^x (\psi(t) - t) f'(t)\dd t$ converges, this shows that
\begin{equation}\label{eqn:bho}
\Psi(x)
= \ln\ln x
  + \opn{C}
  + \frac{\psi(x) - x}{x \ln x }
  - \int_x^\infty (\psi(t)-t) \Big(\frac{1 + \ln{t}}{t^2 \ln^2{t}}\Big)\dd t ,
\end{equation}
for a suitable constant $\opn{C}$. To identify the value of $\opn{C}$, note that both terms
involving $\psi$ go to zero, so that
$
\opn{C}
= \lim_{x\to\infty} \big( \Psi(x) - \ln\ln x \big) .
$
Mertens' theorem also provides
\begin{equation*}
\prod_{p\leq x} \Big(1 - \frac{1}{p}\Big)^{-1} \sim e^{\gamma} \ln x .
\end{equation*}
Therefore, \cite[Lem.~2.3]{Lamzouri} implies
\begin{align*}
\opn{C}
&= \lim_{x\to\infty} \Big\{\Psi(x) - \ln\ln x \Big\}
 = \lim_{x\to\infty} \Big\{\ln\prod_{p\leq x} \Big(1 - \frac{1}{p}\Big)^{-1} + O\Big(\frac{1}{\sqrt{x}\ln x }\Big) - \ln\ln x \Big\} \\
&= \lim_{x\to\infty} \Big\{\ln(e^\gamma \ln x )) - \ln\ln x \Big\}
 = \gamma .
\end{align*}
For $x\geq 73.2$, use \eqref{eqn:Schoenfeld} in \eqref{eqn:bho} and obtain the desired result:
\begin{equation*}
\big|\Psi(x) - \ln\ln x - \gamma \big|
\leq \frac{1}{8 \pi} \Big[ \frac{\ln x }{\sqrt{x}} + \int_x^\infty \Big(\frac{1 + \ln{t}}{t^{3/2}}\Big)\dd t \Big]
=    \frac{3(\ln x + 2)}{8 \pi \sqrt{x}} .
\qedhere
\end{equation*}
\end{proof}

\subsection{Explicit bounds for functions over primes}\label{Section:PrimeFunctions}
The following lemmas enable us to bound certain functions over primes which arise naturally in this
paper. Lemma \ref{lem:annoyingly_long} is unconditional.
\begin{lemma}[\text{L.--Nosal \cite[Cor.~1.3]{LeeNosal}}]
\label{lem:RS_M3}
If $x \geq 23.8$ and the RH is true, then
\begin{equation}
 \prod_{p \leq x} \Big(1-\frac{1}{p}\Big) \geq \frac{e^{-\gamma}}{\ln{x}} \Big(1 - \frac{3\ln{x}}{8\pi\sqrt{x}}\Big).
\end{equation}
\end{lemma}
The previous lemma improves upon \cite[(6.23)]{Schoenfeld}, in which $3 \ln x$ is replaced with $3 \ln x
+ 5$, although the range of applicability for that result is $x \geq 8$.
\begin{lemma}\label{lem:annoyingly_long}
If $x\geq 59$ and $n \geq 2$, then
\begin{equation*}
\prod_{p \leq x} \Big(1-\frac{1}{p^n}\Big)^{-1}
\geq \zeta(n) \exp\Big(- \frac{1}{x \ln x}\Big(1 + \frac{1/2}{\ln x}\Big) \Big) .
\end{equation*}
\end{lemma}
\begin{proof}
Firstly we prove that if $x\geq 59$, then
\begin{equation}\label{eq:A4}
-\sum_{p > x} \ln\left(1-\frac{1}{p^2}\right)
\leq \frac{1}{x\ln x}\Big(1 + \frac{1/2}{\ln x} \Big).
\end{equation}
Let $\pi(x)$ be the prime-counting function. For $x\geq 59$, \cite[Thm.~1]{Rosser} yields
\begin{equation*}
\pi(x) = \frac{x}{\ln x} + R(x),
\end{equation*}
in which $R^-(x) \leq R(x) \leq R^+(x)$ with
\begin{equation}\label{eqn:ChebyshevBounds}
R^-(x) = \frac{x}{2 \ln^2 x}
\qquad \text{and} \qquad
R^+(x) = \frac{3x}{2 \ln^2 x};
\end{equation}
the upper bound holds for $x>1$.
By partial summation we get
\begin{align*}
-\sum_{p > x} \ln\Big(1-\frac{1}{p^2}\Big)
&=  \pi(x)\ln\Big(1-\frac{1}{x^2}\Big) + 2 \int_{x}^{\infty} \frac{\pi(t)}{t^3-t}\dd t	\\
&\leq - \frac{\pi(x)}{x^2} + 2 \int_{x}^{\infty} \frac{\pi(t)}{t^3}\dd t + 2\int_{x}^{\infty} \frac{\pi(t)\dd t}{t^3(t^2-1)} .
\intertext{For convenience, let $\opn{C}=\int_{x}^{\infty} \frac{2\pi(t)\dd t}{t^3(t^2-1)}$ the last term
in previous formula. Continuing, this formula and \eqref{eqn:ChebyshevBounds} produce the upper bound}
&= - \frac{1}{x \ln x} + 2 \int_x^{\infty} \frac{\dd t}{t^2 \ln t} - \frac{R(x)}{x^2} + 2 \int_x^{\infty}\frac{R(t)}{t^3}\dd t + \opn{C}	\\
&= - \frac{1}{x \ln x} {+} 2\Big(\frac{1}{x\ln x} {-} \int_x^{\infty} \frac{\dd t}{t^2\ln^2 t}\Big)
{-} \frac{R(x)}{x^2} {+} 2\int_x^{\infty} \frac{R(t)}{t^3}\dd t {+} \opn{C}	\\
&\leq \frac{1}{x \ln x} - 2\int_x^{\infty} \frac{\dd t}{t^2 \ln^2 t} - \frac{R^-(x)}{x^2} + 2\int_x^{\infty} \frac{R^+(t)}{t^3}\dd t + \opn{C}	\\
&= \frac{1}{x \ln x} - \frac{1/2}{x \ln^2x} + \int_x^{\infty} \frac{\dd t}{t^2 \ln^2 t} + \opn{C}	\\
&= \frac{1}{x \ln x} + \frac{1/2}{x \ln^2x} - \int_x^{\infty} \frac{2\dd t}{t^2 \ln^3 t} + \int_{x}^{\infty} \frac{2\pi(t)\dd t}{t^3(t^2-1)}	\\
&\leq \frac{1}{x \ln x} + \frac{1/2}{x \ln^2x},
\end{align*}
since $\frac{\pi(t)}{t^3(t^2-1)}\leq \frac{1}{t^2\ln^3 t}$.
The desired result follows from \eqref{eq:A4} and the fact that
\begin{equation*}
\prod_{p \leq x} \Big(1-\frac{1}{p^{n}}\Big)^{-1}
= \zeta(n) \prod_{p > x} \Big(1-\frac{1}{p^n}\Big)
\geq \zeta(n) \prod_{p > x} \Big(1-\frac{1}{p^2}\Big) .
\qedhere
\end{equation*}
\end{proof}

\subsection{Artin \texorpdfstring{$L$}{L}-functions}\label{Section:ArtinL}
We require a handful of well-known properties of the following specific Artin $L$-function; see
\cite{Serre, Neukirch, WongThesis} for more details.

Let $\L$ be the normal closure of $\K$, $G=\Gal(\L/\Q)$, and $H=\Gal(\L/\K)$. Let $\triv_G$ and $\triv_H$
be the trivial representations of $G$ and $H$, respectively. We consider $\Ind_H^G \triv_H$, the induced
representation of $\triv_H$ from $H$ to $G$. It is well known that $\zeta_\K(s) = L(s, \triv_H; \L/\K) =
L(s, \Ind_H^G \triv_H; \L/\Q)$ and that $\zeta(s) = L(s, \triv_G; \L/\Q)$. Since $\triv_G$ is obviously
contained in $\Ind_H^G \triv_H$, there exists a representation $\rho$ of $G$ such that $\Ind_H^G
\triv_H=\triv_G\oplus \rho$. The multiplicativity of Artin's $L$-functions with respect to the direct sum
of representations shows that
\begin{equation*}
\zeta_\K(s)
 = L(s, \Ind_H^G \triv_H; \L/\Q)
 = L(s, \triv_G\oplus \rho; \L/\Q)
 = \zeta(s) L(s,\rho; \L/\Q).
\end{equation*}
We denote $L(s,\rho)=L(s,\rho;\L/\Q)$ from now on.
The previous equality makes evident that $L(1,\rho)= \kappa_\K$ and that $L(s,\rho)$ is entire if and only
if the quotient $\zeta_\K/\zeta$ is entire.
At last, comparing the functional equations of $\zeta_\K$, $\zeta$ and $L(s,\rho)$, one recovers that the
dimension and Artin conductor of $\rho$ are $n_{\K}-1$ and $|\Delta_{\K}|$, respectively.
Note that the dimension is obvious from the induction.

For $\Re s > 1$ we have also the expansion
\begin{equation*}
L(s, \rho) = \prod_p \prod_{i=1}^d \Big(1 - \frac{\alpha_i(p)}{p^s} \Big)^{-1} ,
\end{equation*}
in which $d = \deg{\rho} = n_{\K} - 1$, the product runs over primes $p$, and $\alpha_i(p)$ are the
local eigenvalues associated to $\rho$ at $p$. Taking the logarithm of $L(s,\rho)$, we have
\begin{equation*}
\ln L(s, \rho)
= \sum_{n=1}^\infty \frac{\Lambda(n) a_{\rho}(n)}{n^s \ln n} ,
\end{equation*}
in which $a_{\rho}(p^k) = \alpha_1(p)^k + \cdots + \alpha_d(p)^k$. Every $\alpha_i(p)$ is a root of unity
or zero, so $|\alpha_i(p)| \leq 1$ and hence $|a_\rho(p^k)| \leq d = n_{\K} - 1$, for every prime power
$p^k$.

\subsection{Key identities}\label{Section:KI}
In this subsection, we derive upper and lower bounds for $\Sigma(x)$, which was defined in
\eqref{eq:Sigma}. Our upper bound is presented in the next lemma.
\begin{lemma}\label{lem:SigmaUpper}
Suppose the RH is true. If $x\geq e$, then
\begin{equation}
\Sigma(x) \leq (n_{\K} - 1) \Big(\ln\ln{x} + \gamma + \dfrac{3(\ln x + 2)}{8\pi \sqrt{x}}\Big) .
\end{equation}
\end{lemma}
\begin{proof}
The bound $|a_\rho(n)| \leq n_{\K}-1$ for prime powers implies $\Sigma(x) \leq (n_{\K} - 1)\Psi(x)$.
Apply Theorem \ref{Theorem:PsiExplicit} in this upper bound to retrieve the result.
\end{proof}
Our argument to obtain a lower bound on $\Sigma(x)$ is more involved and includes an intermediate
computation that can be made more precise by restricting the range of $x$. In the end, we only require
bounds for $\Sigma(x)$ on $x \geq 5\cdot 10^5$.
\begin{lemma}\label{lem:SigmaLower}
Suppose the RH is true. If $x \geq 59$, then
\begin{equation*}
\Sigma(x)
\geq \ln\bigg(\frac{\zeta(n_{\K})}{e^{\gamma}\ln{x}} \Big(1 - \frac{3\ln{x}}{8\pi\sqrt{x}}\Big)\bigg)
    - \frac{n_{\K} - 1}{\ln{x}} \Big(1 + \frac{\ln^2 x}{8\pi\sqrt{x}} + \frac{1.3\ln{x}}{x}\Big) .
\end{equation*}
\end{lemma}
\begin{proof}
Follow the steps in \cite[p.~373-4]{ChoKim} to see that
\begin{equation*}
\Sigma(x)
= \sum_{n\leq x} \frac {\Lambda(n)a_\rho(n)}{n\ln n}
= -\sum_{p\leq x} \sum_{i=1}^d \ln\!\Big(1-\frac{\alpha_i(p)}{p}\Big) + d\sum_{p\leq x} A_p ,
\end{equation*}
where $d = n_{\K}-1$ and
\begin{equation*}
\sum_{p\leq x} |A_p|
\leq \frac{1}{x\ln x} \sum_{p\leq x} \frac {\ln p}{1-p^{-1}}
= \frac{1}{x\ln x} \Big(\sum_{p\leq x} \ln{p}
 + \sum_{p\leq x} \frac{\ln p}{p}
 + \sum_{p\leq x} \frac{\ln p}{p(p-1)}\Big) .
\end{equation*}
If $x\geq 59$ and the RH is true, then \cite[(6.5)]{Schoenfeld}, \cite[(3.23)]{Rosser}, and the estimate
$\sum_{p} \frac{\ln p}{p(p-1)} \leq 0.8$ tell us
%
\begin{equation*}
\sum_{p\leq x} \frac{\ln{p}}{1-p^{-1}}
\leq x + \frac{\sqrt{x}\ln^2x}{8\pi}
 + \ln{x}.
\end{equation*}
%
Therefore,
\begin{align*}
\sum_{p\leq x} |A_p|
\leq \frac{1}{\ln{x}}\Big(1 + \frac{\ln^2 x}{8\pi\sqrt{x}} + \frac{\ln{x}}{x} \Big) .
\end{align*}
Next, for all primes $p$, \cite[(3.4)]{ChoKim} tells us that
\begin{equation}\label{euler}
\prod_{i=1}^d \Big(1-\frac{\alpha_i(p)}{p}\Big)^{-1}
\geq \Big(1-\frac{1}{p}\Big)\Big(1-\frac{1}{p^{d+1}}\Big)^{-1} .
\end{equation}
Hence
\begin{equation}
\Sigma(x)
\geq \ln\bigg(\prod_{p\leq x} \Big(1 - \frac{1}{p}\Big) \prod_{p\leq x} \Big(1 - \frac{1}{p^{d+1}}\Big)^{-1} \bigg)
      - \frac{d}{\ln{x}} \Big(1 + \frac{\ln^2 x}{8\pi\sqrt{x}} + \frac{\ln{x}}{x} \Big) . \label{eq:SigBoE28}
\end{equation}
Since $d + 1 = n_{\K}$ and $x\geq 59$, Lemma \ref{lem:annoyingly_long} ensures that
\begin{equation*}
\prod_{p\leq x} \Big(1-\frac{1}{p^{n_{\K}}}\Big)^{-1}
\geq \zeta(n_\K)\exp\Big(-\frac{0.3}{x}\Big) .
\end{equation*}
%
To get the desired result, apply this lower bound and Lemma \ref{lem:RS_M3} in \eqref{eq:SigBoE28}.
\end{proof}

\section{Explicit short sum relationship}\label{Section:Duck}
The following result approximates $\ln{\kappa_{\K}}$ with $\Sigma(x)$, for which bounds were established
in the previous section.
\begin{theorem}\label{Theorem:ShortSum}
If $\K\neq\Q$, the Generalized Riemann Hypothesis (GRH) for Dedekind zeta-functions is true, and
$\zeta_{\K}/\zeta$ is entire, then for $x \geq 5\cdot 10^5$, we have
\begin{align*}
\big|\ln{\kappa_{\K}} - \Sigma(x)\big|
&\leq \Big(\frac{3e}{8\pi} + \frac{1.45}{\ln{x}}\Big) \frac{(\ln{x})^3}{\sqrt{x}} (n_{\K} - 1)
    + \Big(\frac{3e}{4\pi} + \frac{6.01}{\ln{x}}\Big) \frac{(\ln{x})^2}{\sqrt{x}} \ln|\Delta_{\K}| .
\end{align*}
\end{theorem}
The remainder of this section is devoted to the proof of Theorem \ref{Theorem:ShortSum}, which spans
Sections \ref{ssec:StepI}--\ref{ssec:StepV}. To begin, we assume the following:
\begin{equation}\label{eq:Parameters}
x \geq 5\cdot 10^5,
\quad \sqrt{x}(\ln x)^2 \leq h < x,
\quad c = \frac{1}{\ln x},
\quad d = n_{\K}-1,
\quad \alpha = \frac{1}{2} + \epsilon,
\end{equation}
in which $\epsilon \in (0,1/2-c)$ can be made small independently of the other parameters.
The lower bound $x \geq 5\cdot 10^5$ is used only in the final stage of the proof.

\subsection{Step I}\label{ssec:StepI}
Integrate \eqref{eq:Sigma}, use partial summation, and obtain
\begin{equation*}
\int_0^{x} \Sigma(t)\dd t
= \sum_{n \leq x} \frac{\Lambda(n) a_\rho(n) (x - n)}{n \ln{n}} .
\end{equation*}
Thus,
\begin{align*}
\frac{1}{h} \int_x^{x+h} \Sigma(t)\dd t
&= \frac{1}{h} \Big(\sum_{n \leq x + h} \frac{\Lambda(n) a_\rho(n) (x + h - n)}{n \ln{n}} - \sum_{n \leq x} \frac{\Lambda(n) a_\rho(n) (x - n)}{n \ln{n}}\Big) \\
&= \Sigma(x) + \frac{1}{h} \sum_{x < n \leq x+h} \frac{\Lambda(n) a_\rho(n) (x+h - n)}{n \ln{n}} .
\end{align*}
Since $|a_\rho(n)| \leq d$ for prime powers and $0\leq x+h - n < h$ for $x < n \leq x+h$,
\begin{align}
\bigg|\Sigma(x) - \frac{1}{h} \int_x^{x+h} \Sigma(t)\dd t\bigg|
&\leq \frac{1}{h} \bigg|\sum_{x < n \leq x+h} \frac{\Lambda(n) a_\rho(n) (x+h - n)}{n \ln{n}} \bigg| \nonumber\\
&\leq d\sum_{x < n \leq x+h} \frac{\Lambda(n) }{n \ln{n}}
= d ( \Psi(x+h) - \Psi(x) )                                                                          \nonumber\\
&\leq d\Big(\int_x^{x+h} \frac{\dd y}{y\ln{y}} + \frac{3(\ln(x+h) + 2)}{4\pi \sqrt{x}} \Big)         \nonumber\\
&\leq d\Big(\frac{h}{x\ln x} + \frac{3(\ln x + \tfrac{h}{x} + 2)}{4\pi \sqrt{x}} \Big)               \label{eq:Hellscape}
\end{align}
by Theorem \ref{Theorem:PsiExplicit} and the inequality $\ln(x+h)= \ln x + \ln (1+\frac{h}{x} )
\leq \ln x + \frac{h}{x}$.

\subsection{Step II}\label{ssec:StepII}
Observe that
\begin{equation}\label{eq:LogMangle}
\ln{L(1+s,\rho)} = \sum_{n=1}^{\infty} \frac{\Lambda(n) a_\rho(n)}{n^{1+s} \ln n}
\end{equation}
converges absolutely on $\Re{s} > 0$. Since $x > 0$ and $c > 0$, \cite[Thm.~5.1]{MontgomeryVaughan} implies
\begin{align*}
\sideset{}{^{\prime}}\sum_{n \leq x} \frac{\Lambda(n) a_\rho(n)}{n \ln{n}}
&= \frac{1}{2\pi i} \int_{c - i\infty}^{c + i\infty} \ln{L(1+s,\rho)} \frac{x^s}{s}\dd s ,
\end{align*}
where the prime indicates that if $x$ is an integer, then the last term in the sum is counted with weight
$1/2$. Therefore,
\begin{equation*}
\Sigma(x) = \sum_{n\leq x} \frac{\Lambda(n) a_\rho(n)}{n \ln{n}}
= \frac{1}{2\pi i} \int_{c - i\infty}^{c + i\infty} \ln{L(1+s,\rho)} \frac{x^s}{s}\dd s
\end{equation*}
except, at most, at integral $x$. This does not affect the mean value below, so
\begin{equation}\label{eq:SmoothOperator}
\frac{1}{h} \int_{x}^{x+h} \!\!\!\! \Sigma(t)\dd t
= \frac{h^{-1}}{2\pi i} \int_{c - i\infty}^{c + i\infty} \ln{L(1+s,\rho)} \frac{(x+h)^{s+1} - x^{s+1}}{s(s+1)}\dd s .
\end{equation}
Finally, \eqref{eq:Hellscape} and \eqref{eq:SmoothOperator} imply that
\begin{equation}\label{eq:Helmet}
\Big|\Sigma(x) - \frac{h^{-1}}{2\pi i} \int_{c - i\infty}^{c + i\infty} \ln{L(1+s,\rho)} \frac{(x+h)^{s+1} - x^{s+1}}{s(s+1)}\dd s \Big|
\leq d \opn{G}(x,h) ,
\end{equation}
in which
\begin{equation}\label{eq:Hell}
\opn{G}(x,h) = \frac{h}{x\ln x} + \frac{3(\ln x+ \tfrac{h}{x} + 2)}{4\pi \sqrt{x}}.
\end{equation}

\subsection{Step III}\label{ssec:StepIII}
The following lemma is an explicit version of \cite[Lem.~8.1]{GranvilleSound}, but it is formulated
explicitly for the representation $\rho$ we are considering in this section.

\begin{figure}
    \begin{tikzpicture}[scale=0.6, every node/.style={scale=0.75}]
        \clip(-5,-1.5) rectangle (13.5,5.5);

        \filldraw[fill=red!10!white, opacity=0.25] (8,3.5) circle (5cm);
        \filldraw[fill=green!25!white,opacity=0.25] (8,3.5) circle (3cm);
        \draw[thick,->] (8,3.5)--(5.45,1.9) node[midway, below]{$r$};
        \draw[thick,->] (8,3.5)--(3.25,5.1) node[midway, above]{$R$};

       \draw[thin,gray](-5,0)--(13,0);
       \draw[thin,gray,dashed](3,-3)--(3,6);
       \draw[thin,gray,dashed](5,-3)--(5,6);
       \draw[thin,gray,dashed](8,-3)--(8,6);

        \filldraw[fill=black] (0,0) circle (0.075cm)node[below]{$0$};
        \filldraw[fill=black] (2,0) circle (0.075cm)node[xshift=5, yshift=-8]{$\frac{1}{2}$};
        \filldraw[fill=black] (3,0) circle (0.075cm)node[xshift=5, yshift=-5]{$\alpha$};
        \filldraw[fill=black] (5,0) circle (0.075cm)node[xshift=5, yshift=-5]{$\sigma$};
        \filldraw[fill=black] (5,3.5) circle (0.075cm)node[right]{$s$};
        \filldraw[fill=black] (8,0) circle (0.075cm)node[xshift=5, yshift=-5]{$2$};
        \filldraw[fill=black] (8,3.5) circle (0.075cm)node[right]{$2+it$};
    \end{tikzpicture}
\caption{Proof of Lemma \ref{Lemma:Handy}.}
\label{Figure:Circles}
\end{figure}

\begin{lemma}\label{Lemma:Handy}
If GRH is true, $L(s,\rho)$ is entire, and $s=\sigma + it$ with $\frac{1}{2} < \alpha < \sigma < 2$,
then
\begin{equation*}
|\ln{L(s,\rho)}| \leq \frac{\opn{H}(\alpha,t) d + (2-\alpha) \ln|\Delta_{\K}|}{\sigma - \alpha} ,
\end{equation*}
in which
\begin{equation}\label{eq:C2}
\opn{H}(\alpha,t) = (4-2\alpha) \bigg[\ln \bigg( \frac{\pi ^{3/2} \zeta (\frac{3}{2})}{6 \sqrt{2}} \bigg) + \frac{\ln(2-\alpha + |3+it|)}{2}\bigg] .
\end{equation}
\end{lemma}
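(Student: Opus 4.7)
The plan is to apply the Borel--Carath\'eodory inequality to $f(w) = \log L(w, \chi_{\K})$ on the pair of concentric discs depicted in Figure \ref{Figure:Circles}, centered at $s_0 = 2 + it$. By GRH and the assumption that $L(s, \chi_{\K})$ is entire, $f$ is holomorphic on $\Re w > 1/2$ (with branch fixed by the absolutely convergent Dirichlet series at $s_0$). I would take the outer radius to be $R = 2 - \alpha$ and the inner radius to be $r = 2 - \sigma$, so that $s = \sigma + it$ lies on the inner circle, $R - r = \sigma - \alpha$ matches the denominator of the claimed bound, and the entire outer disc is contained in $\{\Re w \geq \alpha \geq 1/2\}$, where $f$ is analytic.

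Two ingredients are required. First, a bound on $|f(s_0)|$: since $\Re s_0 = 2$, the Dirichlet series \eqref{eq:LogMangle} converges absolutely, and $|\chi_{\K}(n)| \leq d = n_{\K} - 1$ from Lemma \ref{lem:character_bounds} gives
\[
|\log L(s_0, \chi_{\K})| \;\leq\; d \sum_{n \geq 2} \frac{\Lambda(n)}{n^{2} \log n} \;=\; d \log \zeta(2).
\]
Second, an upper bound on $A(R) := \max_{|w - s_0| = R} \log |L(w, \chi_{\K})|$. The maximum is attained at the leftmost part of the outer circle, where $\Re w = \alpha$, so I would derive the bound via Phragm\'en--Lindel\"of interpolation: on a vertical line to the right of $s = 1$ (e.g.\ $\Re w = 3/2$) use the Dirichlet series bound $|L(w, \chi_{\K})| \leq \zeta(3/2)^{d}$, and on a symmetrically placed line to the left of the critical line transport that bound through the functional equation for $L(s, \chi_{\K})$ from Appendix \ref{Section:Artin}. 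The functional equation contributes the factor $|\Delta_{\K}|^{1-\Re w}$ (responsible for the $(2-\alpha)\log|\Delta_{\K}|$ summand in the target) together with a ratio of Archimedean Gamma factors of total degree $d$; an explicit Stirling estimate on the Gammas, combined with the imaginary-part bound $|\Im w| \leq |t| + R \leq x + 2 - \alpha$, yields the $\tfrac{1}{2}\log(2 - \alpha + |3 + ix|)$ term inside $H(\alpha, x)$, while the explicit constant $\pi^{3/2}\zeta(3/2)/(6\sqrt{2}) = \zeta(2)\zeta(3/2)/\sqrt{2\pi}$ emerges from combining $\log\zeta(2)$ from $|f(s_0)|$, $\log\zeta(3/2)$ from the right-hand end of the interpolation, and $-\tfrac{1}{2}\log(2\pi)$ from the leading Stirling term.

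With both ingredients in place, the Borel--Carath\'eodory inequality
\[
|f(s)| \;\leq\; \frac{R + r}{R - r}\,|f(s_0)| + \frac{2r}{R - r}\bigl(A(R) - \Re f(s_0)\bigr),
\]
combined with $\Re f(s_0) \geq -|f(s_0)|$, $R + r \leq 2R$, and $2r \leq 2R = 4 - 2\alpha$, yields
\[
|f(s)| \;\leq\; \frac{(4 - 2\alpha)\bigl(|f(s_0)| + A(R)\bigr)}{\sigma - \alpha},
\]
which is exactly the claimed inequality once the $(4-2\alpha)$ prefactor is recognized in the definition of $H(\alpha, x)$ in \eqref{eq:C2}.

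The main obstacle will be the outer-circle estimate: the Borel--Carath\'eodory step and the centerpoint bound are routine, but obtaining $A(R)$ with the precise constants of $H(\alpha, x)$ demands careful tracking of the Archimedean local factors of $L(s, \chi_{\K})$ (whose shape depends on the signature of $\K$ through $r_1, r_2$ with $r_1 + 2r_2 = n_{\K}$) and keeping every Stirling estimate fully explicit, so that both $\pi^{3/2}\zeta(3/2)/(6\sqrt{2})$ and the $x$-aspect dependence $\tfrac12\log(2-\alpha+|3+ix|)$ are faithfully reproduced rather than replaced by harmless absolute constants.
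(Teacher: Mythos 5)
Your overall plan is the same as the paper's: Borel--Carath\'eodory on the pair of discs centred at $2+it$ with radii $r=2-\sigma$ and $R=2-\alpha$, the centre-point bound $|\log L(2+it,\chi_{\K})| \leq d\log\zeta(2)$ from the absolutely convergent Dirichlet series, and a uniform estimate for $\log|L|$ on the outer circle. You have also decomposed the constant correctly: indeed $\pi^{3/2}\zeta(3/2)/(6\sqrt 2) = \zeta(2)\zeta(3/2)/\sqrt{2\pi}$, and the paper's $H(\alpha,x)$ arises from folding $\log\zeta(2)$ (centre point) together with $\log(\zeta(3/2)/\sqrt{2\pi})$ and $\tfrac12\log|1+z|$ (outer circle) under the common prefactor $4-2\alpha$.

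Where you diverge from the paper is the outer-circle estimate. You propose to establish it ab initio by Phragm\'en--Lindel\"of interpolation across the functional equation and Stirling's formula. The paper instead imports it as a finished product: it cites the bound $|L(s,\chi_{\K})| \leq \bigl(\zeta(3/2)/\sqrt{2\pi}\bigr)^{d}\sqrt{|\Delta_{\K}|}\,|1+s|^{d/2}$ for $\Re s \geq \tfrac12$ from the authors' earlier work \cite[Lem.~5]{GarciaLeeArtinL}, and then only needs to note $|1+z| \leq 2-\alpha + |3+it| \leq 2-\alpha + |3+ix|$ on the outer circle. Your route is the honest way to obtain that cited lemma, but in the context of this paper it amounts to reproving a quoted ingredient; if you have access to \cite[Lem.~5]{GarciaLeeArtinL} you should simply cite it, as the authors do, which also sidesteps the signature-dependence of the Gamma factors that you (rightly) flag as the technical burden.

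One small repair is needed in your Borel--Carath\'eodory step. With the form you wrote,
\[
|f(s)| \;\leq\; \frac{R+r}{R-r}\,|f(s_0)| + \frac{2r}{R-r}\bigl(A(R) - \Re f(s_0)\bigr),
\]
the substitution $\Re f(s_0) \geq -|f(s_0)|$ pushes an extra $\frac{2r}{R-r}|f(s_0)|$ into the estimate, so after $2r,\,R+r \leq 2R$ you would obtain $\frac{2R}{R-r}\bigl(2|f(s_0)| + A(R)\bigr)$, with a doubled $\log\zeta(2)$ and hence a worse explicit constant than claimed. The paper uses the cleaner standard form
\[
|f(s)| \;\leq\; \frac{2r}{R-r}\max_{|w-s_0|=R}\Re f(w) + \frac{R+r}{R-r}\,|f(s_0)|,
\]
which, after bounding both coefficients by $\frac{2R}{R-r} = \frac{4-2\alpha}{\sigma-\alpha}$, gives $\frac{4-2\alpha}{\sigma-\alpha}\bigl(A(R) + |f(s_0)|\bigr)$ directly and reproduces $H(\alpha,x)$ exactly. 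Switch to that form and the constants come out as stated.
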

\begin{proof}
Consider the circles with centre $2+it$ and radii $r=2-\sigma < R = 2-\alpha$, so that the smaller circle
passes through $s$; see Figure \ref{Figure:Circles}.  By assumption, $\ln{L(s,\rho)}$ is analytic on
the closed larger circle. If $\Re{s} \geq \frac{1}{2}$, then \cite[Lem.~5]{GarciaLeeArtinL} tells us
\begin{equation*}
|L(s,\rho)|
\leq \bigg(\frac{\zeta(\frac{3}{2})}{\sqrt{2 \pi}} \bigg)^d \sqrt{|\Delta_{\K}|}|1+s|^{\frac{d}{2}}.
\end{equation*}
Then $\ln|L(z,\rho)| = \Re \ln {L(z,\rho)}$ and \cite[eq.~(8)]{GarciaLeeArtinL} ensure that
\begin{equation*}
|\ln{L(z,\rho)}| \leq d \ln \zeta(\delta)
\end{equation*}
for $\Re{z} \geq \delta > 1$. The Borel--Carath\'{e}odory theorem says that any point on the smaller circle
(in particular $s$) satisfies
\begin{align*}
|\ln L(&s,\rho)|
 \leq \frac{2r}{R-r} \max_{|z-(2+it)|=R} \Re{\ln{L(z,\rho)}} + \frac{R+r}{R-r} |\ln{L(2+it,\rho)}| \\
&\leq \frac{4 - 2\sigma}{\sigma -\alpha} \max_{|z-(2+it)|=R} \Re{\ln{L(z,\rho)}} + \frac{(4-2\alpha) \ln{\zeta(2)}}{\sigma -\alpha} d \\
&\leq \frac{4-2\alpha}{\sigma -\alpha} \bigg[d\Big(\ln \bigg(\frac{\zeta(\frac{3}{2})}{\sqrt{2 \pi}} \bigg)  + \ln \frac{\pi^2}{6} + \frac{1}{2} \max_{|z-(2+it)|=R} \ln{|1+z|}\Big) + \frac{\ln|\Delta_{\K}|}{2}\bigg],
\end{align*}
in which $|1+z| \leq |3+it| + R = 2 -\alpha +|3+it|$.
\end{proof}
\begin{remark}
Above we applied \cite[Lem.~5]{GarciaLeeArtinL}. One should be able to prove an upper bound for
$|L(s,\rho)|$ involving the factor $|1+s|^{d (1-\sigma)/2}$ using the Phragm\'{e}n--Lindel\"{o}f principle
\cite{PalZhao}. We do not pursue this since the existing bound suffices for our purposes and our final
steps provide tighter numerical control.
\end{remark}

\subsection{Step IV}\label{ssec:StepIV}
We now use Lemma \ref{Lemma:Handy} to move the line of integration of the integral in \eqref{eq:Helmet}.
We remind the reader of our choice of parameters \eqref{eq:Parameters}.
\begin{lemma}\label{Lemma:TheIntegral}
Suppose the GRH is true, $L(s,\rho)$ is entire, and $h = x^{1 - \mu}$ for some real parameter
$0<\mu\leq \frac{1}{2} - \frac{2\ln\ln x}{\ln x}$.
Then, we have
\begin{align}
\Big|\frac{h^{-1}}{2\pi i}&\int_{c-i\infty}^{c+i\infty} \ln{L(1+s,\rho)} \frac{(x+h)^{s+1} - x^{s+1}}{s(s+1)}\dd s - \ln{L(1,\rho)}\Big| \nonumber\\
&\leq \opn{J}(x,\mu) \frac{e  (\ln{x})^2}{\pi \sqrt{x}} \Big(\opn{H}(\tfrac{1}{2},x^{\mu})  \Big(1 + \frac{1}{\mu\ln{x}}\Big)d + \frac{3}{2} \ln|\Delta_{\K}| \Big) , \label{eq:IntegralDamn}
\end{align}
in which
\begin{equation}\label{eq:Damn}
\begin{split}
\opn{J}(x,\mu)
&= \Big(\mu + \frac{2}{\ln{x} - 2}\Big) + \frac{2 \big(1 + x^{-\mu} \big)^{\frac{1}{2} + \frac{1}{\ln{x}}}}{\ln{x}} .
\end{split}
\end{equation}
\end{lemma}

\begin{figure}
    \centering
    \begin{tikzpicture}[xscale=1.25, yscale=1.0]

      \coordinate (A) at (6,-1);           
      \coordinate (B) at (6,1);            
      \coordinate (C) at (0,1);        
      \coordinate (D) at (0,-1);       

      \begin{scope}[very thick,decoration={
    markings,
    mark=at position 0.5 with {\arrow{>}}}
    ]
      \draw[postaction={decorate}] (A) -- (B) node[midway,right=3pt] {$\Gamma_1$};
      \draw[postaction={decorate}] (B) -- (C) node[midway,above=3pt] {$\Gamma_2$};
      \draw[postaction={decorate}] (C) -- (D) node[midway,left=3pt] {$\Gamma_3$};
      \draw[postaction={decorate}] (D) -- (A) node[midway,below=3pt] {$\Gamma_4$};
    \end{scope}
      \node[below] at (A) {$c - iT$};
      \node[above] at (B) {$c + iT$};
      \node[above] at (C) {$\alpha - 1+ c + iT$};
      \node[below] at (D) {$\alpha - 1+c - iT$};

    \end{tikzpicture}
    \caption{The contours $\Gamma_1$, $\Gamma_2$, $\Gamma_3$, $\Gamma_4$.}
    \label{Figure:Contour}
\end{figure}
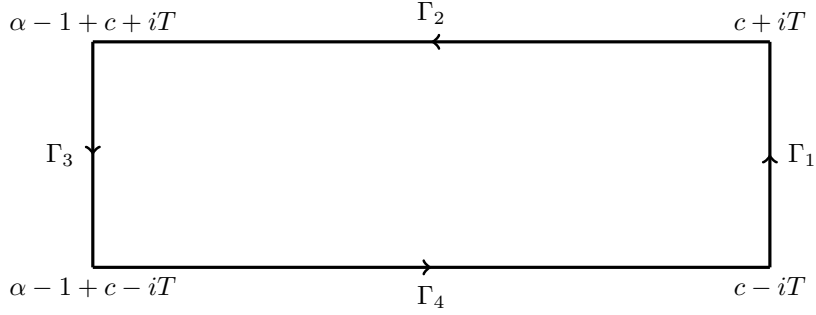

\begin{proof}
Since $\alpha = \frac{1}{2} + \epsilon$, in which $0 < \epsilon < \frac{1}{2}$,\label{p:Alpha}
the GRH ensures that $L(s,\rho)$ is zero-free in $[\alpha, 1] \times (-\infty,\infty)$.
Let $\mathcal{R} = \Gamma_1\cup\Gamma_2\cup\Gamma_3\cup\Gamma_4$ as in Figure \ref{Figure:Contour}. Define
\begin{align}
I_{\Gamma_i}
&= \frac{h^{-1}}{2\pi i }\int_{\Gamma_i} \ln{L(1+s,\rho)} \frac{(x+h)^{s+1} - x^{s+1}}{s(s+1)}\dd s \nonumber\\
&= \frac{h^{-1}}{2\pi i }\int_{\Gamma_i} \frac{\ln{L(1+s,\rho)}}{s} \int_x^{x+h} y^{s}\dd y\dd s    \label{eq:IntLine}
\end{align}
and note that $I_{\Gamma_1}$ is
\begin{equation}\label{eqn:bristol2}
\frac{h^{-1}}{2\pi i }\int_{c-iT}^{c+iT} \ln{L(1+s,\rho)} \frac{(x+h)^{s+1} - x^{s+1}}{s(s+1)}\dd s
= I_{\mathcal{R}} - I_{\Gamma_2} - I_{\Gamma_3} - I_{\Gamma_4}.
\end{equation}
The zero-free region $[\alpha, 1] \times [-T,T]$ ensures that the only singularity inside the closed
contour $\mathcal{R}$ is at $s=0$ (because the hypotheses imply that $\alpha-1+c<0$), so the residue
theorem implies $I_{\mathcal{R}} = \ln{L(1,\rho)}$.

\medskip\noindent\textsc{Bounding $I_{\Gamma_2}$ and $I_{\Gamma_4}$.}
For $s = \sigma \pm iT$ on $\Gamma_2$ and $\Gamma_4$,
\begin{equation*}
\frac{1}{\ln x} = c = 1 + (\alpha - 1 + c) - \alpha \leq (1+ \sigma) - \alpha,
\end{equation*}
so \eqref{eq:IntLine}, Lemma \ref{Lemma:Handy}, and symmetry ensure that
\begin{align*}
|I_{\Gamma_2}|+|I_{\Gamma_4}|
&\leq \frac{1}{\pi h } \Big| \int_{\alpha - 1 + c + iT}^{c + iT}  \frac{\ln{L(1+s,\rho)}}{s} \int_{x}^{x+h} t^{s} \dd t \dd s \Big|  \\
&\leq \frac{(x+h)^c}{\pi T} \int_{\alpha - 1 + c}^{c}
| \ln{L(1+\sigma + iT,\rho)} | \dd\sigma \\
&\leq \frac{(x+h)^c}{\pi T} \int_{\alpha - 1 + c}^{c}
\frac{\opn{H}(\alpha,T) d + (2-\alpha) \ln|\Delta_{\K}|}{1 + \sigma - \alpha} \dd\sigma \\
&\leq \frac{(x+h)^c (\opn{H}(\alpha,T) d + (2-\alpha) \ln|\Delta_{\K}|)}{\pi T c} \int_{\alpha - 1 + c}^{c}
\dd\sigma
\to 0
\end{align*}
as $T \to \infty$, {since $\opn{H}(\alpha,T) \ll \ln{T}$}.  Thus, $I_{\Gamma_2}$ and $I_{\Gamma_4}$
vanish in the limit.

\medskip\noindent\textsc{Bounding $I_{\Gamma_3}$.}
Suppose that $0 < \mu < 1$ is a real parameter to be chosen later such that $x^{\mu} > 1$.
As $T\to\infty$, \eqref{eq:IntLine} ensures that $|I_{\Gamma_3}| \leq |J_1| + |J_2|$, where
\begin{align*}
J_1 &= \frac{h^{-1}}{2\pi i} \int_{- x^{\mu}}^{x^{\mu}} \ln{L(\alpha + c + it,\rho)} \int_x^{x+h} \frac{y^{\alpha - 1 + c + it}}{\alpha - 1 + c + it}\dd y\dd t , \quad\text{and} \\
J_2 &= \frac{h^{-1}}{2\pi i} \Bigg(\int_{-\infty}^{-x^{\mu}} + \int_{x^{\mu}}^{\infty}\Bigg)
                             \ln{L(\alpha + c + it,\rho)} \frac{(x+h)^{\alpha + c + it} - x^{\alpha + c + it}}{(\alpha - 1 + c + it)(\alpha + c + it)}\dd t .
\end{align*}
We begin by bounding $J_1$. First, observe that
\begin{equation*}
\int_x^{x+h} y^{\alpha - 1 + c}\dd y
\leq x^{\alpha - 1 + c} \int_x^{x+h} \dd y
=    h x^{\alpha - 1 + c} .
\end{equation*}
It follows from this and Lemma \ref{Lemma:Handy} that
\begin{align*}
|J_1|
&\leq \frac{1}{\pi h c} \int_{0}^{x^{\mu}} \frac{\opn{H}(\alpha,t) d + (2-\alpha) \ln|\Delta_{\K}|}{|\alpha - 1 + c + it|} \int_x^{x+h} y^{\alpha - 1 + c}\dd y\dd t \\
&\leq \frac{x^{\alpha - 1 + c}}{\pi c} \int_{0}^{x^{\mu}} \frac{\opn{H}(\alpha,t) d + (2-\alpha) \ln|\Delta_{\K}|}{|\alpha - 1 + c + it|} \dd t .
\end{align*}
We also have $\opn{H}(\alpha,t) \leq \opn{H}(\alpha,x^{\mu})$ on $|t| \leq x^{\mu}$ and
\begin{align*}
\int_{0}^{x^{\mu}} \frac{\dd t}{|\alpha - 1 + c + it|}
&\leq \int_{1}^{x^{\mu}} \frac{\dd t}{t} + \int_{0}^{1} \frac{\dd t}{1 - \alpha - c}
= \mu \ln{x} + \frac{1}{1 - \alpha - c} .
\end{align*}
Therefore,
\begin{align*}
|J_1|
&\leq \frac{x^{\alpha - 1 + c} (\opn{H}(\alpha,x^{\mu}) d + (2-\alpha) \ln|\Delta_{\K}|)}{\pi c} \int_{0}^{x^{\mu}} \frac{\dd t}{|\alpha - 1 + c + it|} \\
&\leq \frac{x^{\alpha - 1 + c} \ln{x} (\opn{H}(\alpha,x^{\mu}) d + (2-\alpha) \ln|\Delta_{\K}|)}{\pi} \Big(\mu \ln{x} + \frac{1}{1 - \alpha - c}\Big) .
\end{align*}
It also follows from Lemma \ref{Lemma:Handy} that
\begin{align*}
|J_2| &\leq \frac{2 (x+h)^{\alpha + c}}{\pi h c} \int_{x^{\mu}}^{\infty} \frac{\opn{H}(\alpha,t) d + (2-\alpha) \ln|\Delta_{\K}|}{t^2}\dd t .
\end{align*}
Since $\opn{H}(\alpha,t)/\ln{t}$ decreases on $t \geq x^{\mu} > 1$, we have
\begin{equation*}
\opn{H}(\alpha,t)
= \frac{\opn{H}(\alpha,t)}{\ln{t}} \ln{t}
\leq \frac{\opn{H}(\alpha,x^{\mu})}{\mu\ln{x}} \ln{t} .
\end{equation*}
Therefore,
\begin{align*}
|J_2|
&\leq \frac{2 (x+h)^{\alpha + c}}{\pi h c} \Big(\frac{\opn{H}(\alpha,x^{\mu}) d}{\mu\ln{x}} \int_{x^{\mu}}^{\infty} \frac{\ln{t}}{t^2}\dd t + (2-\alpha) \ln|\Delta_{\K}| \int_{x^{\mu}}^{\infty} \frac{\dd t}{t^2}\Big) \\
&= \frac{2 (x+h)^{\alpha + c} \ln{x}}{\pi h} \Big(\frac{\opn{H}(\alpha,x^{\mu}) d}{x^{\mu}} \Big(1 + \frac{1}{\mu\ln{x}}\Big) + \frac{(2-\alpha) \ln|\Delta_{\K}|}{x^{\mu}} \Big) .
\end{align*}
Combing these bounds, we have shown
\begin{align*}
|I_{\Gamma_3}|
\leq& \frac{x^{\alpha - 1 + c} \ln{x} (\opn{H}(\alpha,x^{\mu}) d + (2-\alpha) \ln|\Delta_{\K}|)}{\pi} \Big(\mu \ln{x} + \frac{1}{1 - \alpha - c}\Big) \\
& + \frac{2 x^{\alpha + c} \ln{x}}{\pi x^{\mu} h} \Big(1+\frac{h}{x}\Big)^{\alpha + c} \Big(\opn{H}(\alpha,x^{\mu}) \Big(1 + \frac{1}{\mu\ln{x}}\Big) d + (2-\alpha) \ln|\Delta_{\K}| \Big) .
\end{align*}

\medskip\noindent\textsc{Combination.}
Recall that $c = 1/\ln x$ (so $x^c = e$) and $\alpha = 1/2 + \epsilon$ for some $\epsilon \in (0,1/2-c)$;
see \eqref{eq:Parameters}. Let $T\to\infty$ in \eqref{eqn:bristol2} and deduce
\begin{align*}
\Big|\frac{h^{-1}}{2\pi i}&\int_{c-i\infty}^{c+i\infty} \ln{L(1+s,\rho)} \frac{(x+h)^{s+1} - x^{s+1}}{s(s+1)}\dd s - \ln{L(1,\rho)}\Big| \\
\leq& \frac{x^{\alpha - 1 + c} \ln{x} (\opn{H}(\alpha,x^{\mu}) d + (2-\alpha) \ln|\Delta_{\K}|)}{\pi} \Big(\mu \ln{x} + \frac{1}{1 - \alpha - c}\Big) \\
&    + \frac{2 x^{\alpha + c} \ln{x}}{\pi x^{\mu} h} \Big(1+\frac{h}{x}\Big)^{\alpha + c} \Big(\opn{H}(\alpha,x^{\mu}) \Big(1 + \frac{1}{\mu\ln{x}}\Big) d + (2-\alpha) \ln|\Delta_{\K}| \Big) .
\end{align*}
Assert $h = x^{1 - \mu}$ and insert the definitions of $\alpha$ and $c$ in this upper bound. Upon
letting $\epsilon \to 0^+$, it follows that this upper bound converges to
\begin{align*}
\Big|\frac{h^{-1}}{2\pi i}&\int_{c-i\infty}^{c+i\infty} \ln{L(1+s,\rho)} \frac{(x+h)^{s+1} - x^{s+1}}{s(s+1)}\dd s - \ln{L(1,\rho)}\Big|       \\
\leq& \frac{e (\ln{x})^2 (\opn{H}(\tfrac{1}{2},x^{\mu}) d + \frac{3}{2} \ln|\Delta_{\K}|)}{\pi \sqrt{x}} \Big(\mu + \frac{2}{\ln{x} - 2}\Big) \\
&   + \frac{2 e \ln{x}}{\pi \sqrt{x}} \big(1 + x^{-\mu}\big)^{\tfrac{1}{2} + \frac{1}{\ln x}} \Big(\opn{H}(\tfrac{1}{2},x^{\mu}) \Big(1 + \frac{1}{\mu\ln{x}}\Big) d + \frac{3}{2} \ln|\Delta_{\K}| \Big) ,
\end{align*}
which is marginally stronger than the desired upper bound \eqref{eq:IntegralDamn}.
\end{proof}

\subsection{Step V}\label{ssec:StepV}
We conclude the proof of Theorem \ref{Theorem:ShortSum}. Use Lemma \ref{Lemma:TheIntegral} to see that if
the GRH is true, $L(s,\rho)$ is entire, $x > e^2$, $d = n_{\K} - 1$, $c = 1/\ln x $, and $h = x^{1 -
\mu}$ for some $0 < \mu \leq \frac{1}{2} - \frac{2\ln\ln x}{\ln x}$ such that $1 < x^{\mu} \leq
\sqrt{x} (\ln{x})^{-2}$, then \eqref{eq:Helmet} and \eqref{eq:IntegralDamn} provide
\begin{align*}
\big|\ln &{L(1,\rho)} - \Sigma(x)\big|                                                                            \\
&\leq d \opn{G}(x,x^{1 - \mu})
      + \opn{J}(x,\mu) \frac{e (\ln{x})^2}{\pi \sqrt{x}}
         \Big(\opn{H}(\tfrac{1}{2},x^{\mu}) \Big(1 + \frac{1}{\mu\ln{x}}\Big)d + \frac{3}{2}\ln|\Delta_{\K}|\Big) \\
&\leq \Bigg(\frac{\opn{F}_1(x,\mu)}{x^{\mu} \ln{x}} + \frac{\opn{F}_2(x,\mu)(\ln{x})^3}{\sqrt{x}}\Bigg)(n_{\K} - 1)
      + \frac{\opn{F}_3(x,\mu) (\ln{x})^2}{\sqrt{x}} \ln|\Delta_{\K}| ,
\end{align*}
in which
\begin{align}
\opn{F}_1(x,\mu)
&= 1 + \frac{3 x^{\mu - \frac{1}{2}} (\ln x + x^{-\mu} + 2) \ln{x}}{4\pi} ,
\label{eq:A1} \\
\opn{F}_2(x,\mu)
&= \frac{3 e}{\pi \ln{x}} \Bigg(\ln \bigg( \frac{\pi ^{3/2} \zeta (\frac{3}{2})}{6 \sqrt{2}} \bigg) + \frac{\ln(\tfrac{3}{2} + \sqrt{9+x^{2\mu}})}{2}\Bigg) \label{eq:A2} \\[5pt]
&\qquad \times\Big(\frac{1}{2} + \frac{2}{\ln{x} - 2} + \frac{2 \big(1 + x^{-\mu}\big)^{\frac{1}{2} + \frac{1}{\ln{x}}}}{\ln{x}}\Big)
\Big(1 + \frac{1}{\mu\ln{x}}\Big) ,
\quad\text{and}\nonumber \\[5pt]
\opn{F}_3(x,\mu)
&= \frac{3 e}{4 \pi} \Big(1 + \frac{4}{\ln{x} - 2} + \frac{4 \big(1 + x^{-\mu}\big)^{\frac{1}{2} + \frac{1}{\ln{x}}}}{\ln{x}}\Big) .
\label{eq:A3}
\end{align}
Note that $x^{\mu} \leq \sqrt{x} (\ln{x})^{-2}$ ensures that each $\opn{F}_i(x,\mu)$ is a bounded
function of $x$.
Now, let $\mu = \frac{1}{2}-2 \frac{\ln \ln x}{\ln x}$, so that $x^{\mu} = \sqrt{x} (\ln{x})^{-2}$.
It follows that as $x\to\infty$, we have
\begin{equation*}
\mu \to \frac{1}{2}, \quad
\opn{F}_1(x,\mu) \to 1 + \frac{3}{4\pi} , \quad
\opn{F}_2(x,\mu) \to \frac{3e}{8\pi} , \quad\text{and}\quad
\opn{F}_3(x,\mu) \to \frac{3e}{4\pi} .
\end{equation*}
Moreover, we observe that on $x \geq 5\cdot 10^5$, $x^{\mu}$ is increasing,
\begin{align*}
&\opn{F}_1(x,\mu) - 1 - \frac{3}{4\pi}
\leq \frac{0.54}{\ln{x}} ,
\qquad
\opn{F}_2(x,\mu) - \frac{3e}{8\pi}
\leq \frac{1.35}{\ln{x}} , \\
&\qquad\qquad\text{and}\quad
\opn{F}_3(x,\mu) - \frac{3e}{4\pi}
\leq \frac{6.01}{\ln{x}} .
\end{align*}
%
%
%
Combine these observations with $L(1,\rho) = \kappa_{\K}$ to see that
\begin{multline*}
\big|\ln{\kappa_{\K}} -  \Sigma(x)\big|
\leq \Big(\frac{3e}{8\pi} + \frac{1.35}{\ln{x}} + \frac{1 + \frac{3}{4\pi}}{(\ln{x})^2} + \frac{0.54}{(\ln{x})^3}\Big) \frac{(\ln{x})^3}{\sqrt{x}} (n_{\K} - 1) \\
   + \Big(\frac{3e}{4\pi} + \frac{6.01}{\ln{x}}\Big) \frac{(\ln{x})^2}{\sqrt{x}} \ln|\Delta_{\K}| ,
\end{multline*}
which is marginally stronger than the desired upper bound. \qed

\section{Proof of Theorem \ref{Theorem:Main}}\label{Section:Proof}

Suppose in this section that $\K \neq \Q$ is a number field, the GRH for Dedekind zeta-functions is true,
and $\zeta_{\K}/\zeta$ is entire. Here we prove Theorem \ref{Theorem:Main}. To this end, we introduce key
definitions and some important parameter choices in Section \ref{Subsection:Setup}. We then prove
\eqref{eq:BoundUpper} and \eqref{eq:BoundLower_ii} in Subsections \ref{Subsection:Upper} and
\ref{Subsection:Lower}, respectively.

\subsection{Setup}\label{Subsection:Setup}
The function
\begin{equation}\label{eq:M}
\opn{M}(t) = 1 + \frac{4\ln \ln \ln t}{\ln\ln t}
\end{equation}
has a maximum $1 + \frac{4}{e} \approx 2.4715\ldots$ at $t = \exp(e^e)$. Let
\begin{equation}\label{eqn:x-small-ell}
x = (\ln |\Delta_{\K}|)^{2\opn{M}(| \Delta_{\K} |)}
  = (\ln |\Delta_{\K}|)^2 (\ln \ln |\Delta_{\K}| )^{8}
\end{equation}
and observe that $x \geq 5\cdot 10^5$ on $|\Delta_{\K}| \geq 1.6\cdot 10^6$,
%
so Lemma \ref{lem:SigmaUpper}, Lemma \ref{lem:SigmaLower}, and Theorem \ref{Theorem:ShortSum} apply
freely in what follows. Note for reference that $\ln x = 2 \opn{M}(|\Delta_{\K}|) \ln \ln |\Delta_{\K}|$.
\begin{remark}
The choice \eqref{eqn:x-small-ell} is necessary to ensure that the error terms of asymptotic order
$x^{-1/2}(\ln{x})^3 (n_{\K} - 1)$ and $x^{-1/2}(\ln{x})^2\ln{|\Delta_{\K}|}$ in our arguments are
decreasing as $|\Delta_{\K}| \to \infty$ with the right shape.
\end{remark}

\subsection{Upper bound}\label{Subsection:Upper}
In this subsection we prove the upper bound \eqref{eq:BoundUpper}. To begin, observe that Theorem
\ref{Theorem:ShortSum} implies that if $x \geq 5\cdot 10^5$, then
\begin{align*}
\kappa_{\K}
&\leq e^{\Sigma(x)
         + \big(\frac{3e}{8\pi} + \frac{1.45}{\ln{x}}\big) \frac{(\ln{x})^3}{\sqrt{x}} (n_{\K} - 1)
         + \big(\frac{3e}{4\pi} + \frac{6.01}{\ln{x}}\big) \frac{(\ln{x})^2}{\sqrt{x}} \ln|\Delta_{\K}|} \\
&\leq e^{\Sigma(x) + 0.435\frac{(\ln{x})^3}{\sqrt{x}} (n_{\K} - 1) + 1.11\frac{(\ln{x})^2}{\sqrt{x}} \ln|\Delta_{\K}|} .
\end{align*}
%
Furthermore, Lemma \ref{lem:SigmaUpper} implies that if $x \geq 5\cdot 10^5$, then
\begin{align*}
\kappa_{\K}
&\leq e^{(n_{\K} - 1) \big(\ln\ln{x} + \gamma + \frac{3(\ln x  + 2)}{8\pi \sqrt{x}}\big)
      + 0.435\frac{(\ln{x})^3}{\sqrt{x}} (n_{\K} - 1)
      + 1.11\frac{(\ln{x})^2}{\sqrt{x}} \ln|\Delta_{\K}|} \\
&\leq (e^{\gamma}\ln{x})^{n_{\K} - 1}
   e^{0.44 \frac{(\ln{x})^3 (n_{\K} - 1)}{\sqrt{x}}
    + 1.11 \frac{(\ln{x})^2}{\sqrt{x}} \ln|\Delta_{\K}|} .
\end{align*}
%
Thus, \eqref{eqn:x-small-ell} ensures that for $|\Delta_{\K}| \geq 1.6\cdot 10^6$,
\begin{align*}
\kappa_{\K}
&\leq (2 \opn{M}(|\Delta_{\K}|) e^{\gamma}\ln\ln{|\Delta_{\K}|})^{n_{\K} - 1}
      e^{\frac{0.44 (2 \opn{M}(|\Delta_{\K}|))^3 (n_{\K} - 1)}{\ln{|\Delta_{\K}|} \ln\ln{|\Delta_{\K}|}}
       +\frac{1.11 (2 \opn{M}(|\Delta_{\K}|))^2}{(\ln\ln{|\Delta_{\K}|})^2}} \\
&\leq \Big(2 \opn{M}(|\Delta_{\K}|)
      e^{\gamma + \frac{0.44 (2 \opn{M}(|\Delta_{\K}|))^3}{\ln{|\Delta_{\K}|} \ln\ln{|\Delta_{\K}|}}
                + \frac{1.11 (2 \opn{M}(|\Delta_{\K}|))^2}{(\ln\ln{|\Delta_{\K}|})^2}}
      \ln\ln{|\Delta_{\K}|}\Big)^{n_{\K} - 1}
\end{align*}
We also have $1 + w \leq e^{w}$ for every $w > 0$. Therefore, if $|\Delta_{\K}| \geq 1.6\cdot 10^6$, then
\begin{align*}
\kappa_{\K}
&\leq \Big(2
      e^{\gamma + \frac{4\ln\ln\ln{|\Delta_{\K}|}}{\ln\ln{|\Delta_{\K}|}} + \frac{0.44 (2 \opn{M}(|\Delta_{\K}|))^3}{\ln{|\Delta_{\K}|} \ln\ln{|\Delta_{\K}|}}
                + \frac{1.11 (2 \opn{M}(|\Delta_{\K}|))^2}{(\ln\ln{|\Delta_{\K}|})^2}}
      \ln\ln{|\Delta_{\K}|}\Big)^{n_{\K} - 1} \\
&\leq \Big(2
      e^{\gamma + \frac{18.3\ln\ln\ln{|\Delta_{\K}|}}{\ln\ln{|\Delta_{\K}|}}}
      \ln\ln{|\Delta_{\K}|}\Big)^{n_{\K} - 1}
= \Big(2
      e^{\gamma}
      (\ln\ln{|\Delta_{\K}|})^{1 + \frac{18.3}{\ln\ln{|\Delta_{\K}|}}}\Big)^{n_{\K} - 1}.
\end{align*}
%
This completes the proof of \eqref{eq:BoundUpper} for all $|\Delta_{\K}| \geq 1.6\cdot 10^6$. The claim
for $|\Delta_\K|< 1.6\cdot 10^6$ is verified directly, since all fields in this range are explicitly
tabulated (Remark \ref{Remark:End}).

\subsection{Lower bound}\label{Subsection:Lower}
In this subsection we prove the lower bound \eqref{eq:BoundLower_ii}. Similarly to the above, observe that
Theorem \ref{Theorem:ShortSum} implies that if $x \geq 5\cdot 10^5$, then
\begin{align*}
\kappa_{\K}
&\geq e^{\Sigma(x) - 0.435\frac{(\ln{x})^3}{\sqrt{x}} (n_{\K} - 1)
         - 1.11\frac{(\ln{x})^2}{\sqrt{x}} \ln|\Delta_{\K}|} .
\end{align*}
Next, Lemma \ref{lem:SigmaLower} implies that if $x \geq 5\cdot 10^5$, then
\begin{align*}
\kappa_{\K}
&\geq \frac{\zeta(n_{\K})}{e^{\gamma}\ln{x}} \Big(1 - \frac{3\ln{x}}{8\pi\sqrt{x}}\Big)
      e^{- \frac{n_{\K} - 1}{\ln{x}} \big( 1 + \frac{(\ln{x})^2}{8\pi\sqrt{x}} + \frac{1.3\ln{x}}{x} + 0.435\frac{(\ln{x})^4}{\sqrt{x}} \big)
         - 1.11\frac{(\ln{x})^2}{\sqrt{x}} \ln|\Delta_{\K}|} \\
&\geq \frac{\zeta(n_{\K})}{e^{\gamma}\ln{x}} \Big(1 - \frac{3\ln{x}}{8\pi\sqrt{x}}\Big)
      e^{- \frac{n_{\K} - 1}{\ln{x}} \big(1 + 0.44\frac{(\ln{x})^4}{\sqrt{x}} \big) - 1.11\frac{(\ln{x})^2 \ln|\Delta_{\K}|}{\sqrt{x}}} .
\end{align*}
%
Now, we have $1 - w \geq e^{-2w}$ for $w\leq 1/2$. The assumption $x\geq 5\cdot 10^5$ ensures that
$\frac{3\ln{x}}{8\pi\sqrt{x}} \leq \frac{1}{2}$
%
and we get that
\begin{align*}
\kappa_{\K}
&\geq \frac{\zeta(n_{\K})}{e^{\gamma}\ln{x}}
      e^{- \frac{n_{\K} - 1}{\ln{x}} \big(1 + 0.44\frac{(\ln{x})^4}{\sqrt{x}} + 1.11 \frac{(\ln{x})^3 \ln|\Delta_{\K}|}{\sqrt{x}} + \frac{3(\ln{x})^2}{4\pi\sqrt{x}} \big)} .
\end{align*}
By \eqref{eqn:x-small-ell}, for $|\Delta_\K|\geq 1.6\cdot 10^6$ we also have
\begin{align*}
    \frac{0.44(\ln{x})^4}{\sqrt{x}}
    &= \frac{0.44 (2 \opn{M}(| \Delta_{\K} |))^4}{\ln|\Delta_{\K}|} \leq 18.38 , \\
    \frac{1.11 (\ln{x})^3 \ln|\Delta_{\K}|}{\sqrt{x}}
    &= \frac{1.11 (2 \opn{M}(| \Delta_{\K} |))^3}{\ln\ln|\Delta_{\K}|} \leq 50.4 , \quad\text{and}\\
    \frac{3(\ln{x})^2}{4\pi\sqrt{x}}
    &= \frac{3 (2\opn{M}(| \Delta_{\K} |))^2}{4\pi\ln|\Delta_{\K}| (\ln\ln|\Delta_{\K}|)^2}
    \leq 0.06 .
\end{align*}
%
Note that $1 + 18.38 + 50.4 + 0.06 = 69.84$.  For $|\Delta_{\K}| \geq 1.6\cdot 10^6$, we have
\begin{align*}
\kappa_{\K}
&\geq \frac{\zeta(n_{\K})}{2 \opn{M}(| \Delta_{\K} |) e^{\gamma}\ln\ln|\Delta_{\K}|}
      e^{- \frac{69.84 (n_{\K} - 1)}{2 \opn{M}(|\Delta_{\K}|) \ln\ln|\Delta_{\K}|}} \\
\intertext{and since $(1 + w)^{-1} \geq e^{-w}$ for every $w > 0$, we get}
\kappa_{\K}
&\geq \frac{\zeta(n_{\K})}{2 e^{\gamma}\ln\ln|\Delta_{\K}|}
      e^{- \frac{(n_{\K} - 1)\ln\ln\ln|\Delta_{\K}|}{\ln\ln|\Delta_{\K}|} \left(4 + \frac{69.84}{2\opn{M}(|\Delta_{\K}|) \ln\ln\ln|\Delta_{\K}|} \right)} \\
&\geq \frac{\zeta(n_{\K})}{2 e^{\gamma}\ln\ln|\Delta_{\K}|}
      e^{- \frac{18.5 (n_{\K} - 1)\ln\ln\ln|\Delta_{\K}|}{\ln\ln|\Delta_{\K}|} }
= \frac{\zeta(n_{\K})}{2 e^{\gamma} (\ln\ln|\Delta_{\K}|)^{1 + \frac{18.5 (n_{\K} - 1)}{\ln\ln|\Delta_{\K}|}}} .
\end{align*}

This completes the proof of \eqref{eq:BoundLower_ii} for all $|\Delta_{\K}| \geq 1.6\cdot 10^6$. The
claim for $|\Delta_\K|< 1.6\cdot 10^6$ is verified directly, since all fields in this range are
explicitly tabulated.
\begin{remark}\label{Remark:End}
Every field with $|\Delta_{\K}| \leq 1.6\cdot 10^6$ has degree $\leq 8$. In
\url{https://sites.unimi.it/molteni/research/bounds_residue/bounds_residue.html}, we have made available
the list of those fields, as downloaded from \texttt{LMFDB}~\cite{LMFDB}, the script we used to check the
theorem for them, and the results we obtained for each of them (we computed for each field a constant
that can replace $19$ in~\eqref{eq:BoundUpper} and a similar one for~\eqref{eq:BoundLower_ii}). An
analysis of these results shows that for fields with $|\Delta_{\K}|\leq 1.6\cdot 10^6$, the bounds hold
true also in the version where both occurrences of $19$ are replaced with $0$, except for
$\Q\big(\sqrt{-163}\big)$ and those with $-4\leq \Delta_{\K}\leq 8$.
\end{remark}


\end{document}